\theoremstyle{plain}
\newtheorem{theorem}{Theorem}[section]
\newtheorem{lemma}[theorem]{Lemma}
\theoremstyle{definition}
\newtheorem{definition}[theorem]{Definition}
\newtheorem{remark}[theorem]{Remark}
\newtheorem{example}[theorem]{Example}
\newcommand{\A}{\mathcal{A}}
\newcommand{\B}{\mathcal{B}}
\newcommand{\F}{\mathbb{F}}
\newcommand{\scF}{\mathcal{F}}
\newcommand{\G}{\mathbb{G}}
\newcommand{\K}{\mathbb{K}}
\newcommand{\Z}{\mathbb{Z}}
\newcommand{\Hom}{\operatorname{Hom}} 
\newcommand{\Ker}{\operatorname{Ker}}
\title{Feynman graphs and Hyperplane arrangements defined over $\mathbb{F}_1$} 
\author{Kyosuke Higashida\thanks{Graduate School of Mathematical Science, The University of Tokyo, 3-8-1 Komaba, Meguroku, Tokyo, 153-8914, JAPAN.
E-mail: k-higashida@g.ecc.u-tokyo.ac.jp}, 
Masahiko Yoshinaga\thanks{(Corresponding author) Department of Mathematics, Faculty of Science, 
Hokkaido University, North 10, West 8, Kita-ku, 
Sapporo 060-0810, JAPAN 
E-mail: yoshinaga@math.sci.hokudai.ac.jp}}
\date{\today}
\begin{document}
\maketitle

\begin{abstract} 
Motivated by some computations of Feynman integrals and certain conjectures 
on mixed Tate motives, Bejleri and Marcolli posed questions about  
the $\mathbb{F}_1$-structure (in the sense of torification) 
on the complement of a hyperplane arrangement, 
especially for an arrangement defined in the space of cycles of a graph. 

In this paper, we prove that an arrangement 
has an $\mathbb{F}_1$-structure 
if and only if 
it is Boolean. We also prove that the arrangement in the cycle space of a 
graph is Boolean if and only if the cycle space has a basis consisting of 
cycles such that any two of them do not share edges. \\
\textbf{Keywords}: Hyperplane arrangements, graphs, torifications. 
\end{abstract}


\section{Introduction}
\label{sec:intro}

Theoretical physics, especially quantum field theory and 
Feynman integrals, raises many mathematical problems \cite{bek}. 
In \cite{bm}, Bejleri and Marcolli studied certain algebraic 
varieties associated with Feynman integrals for graphs 
from the viewpoint of mixed Tate motives and Grothendieck 
ring of varieties. They also discuss the $\mathbb{F}_1$-structure 
(torification, see \cite{lpl}) of these varieties, which is a more recent perspective. 

The category of mixed Tate motives is conjectured to be generated 
by the objects defined by using hyperplane arrangements. 
Therefore, it is a natural question to ask whether a hyperplane 
arrangement has an $\mathbb{F}_1$-structure or not. 
In \cite{bm}, a combinatorial necessary condition 
for an arrangement to have $\F_1$-structure was given. 

The purpose of this paper is to answer questions 
posed in \cite[Question 5.3, 5.4]{bm}. 
Namely, we discuss 
the $\mathbb{F}_1$-structure 
on the complement of a hyperplane arrangement, 
especially for an arrangement defined in the cycle space of a graph 
which is closely related to a variety appearing in the Feynman integral 
\cite[\S 2.2, \S 2.3]{bm}. 

The plan of the present paper is as follows. 
In \S \ref{sec:char}, we recall some basic notions of hyperplane arrangements, 
especially, the notions of the characteristic polynomial and the Boolean arrangement. 
In \S \ref{sec:arr}, 
we show that an arrangement has an $\F_1$-structure if and only if 
it is Boolean. We also show that, under an additional assumption, 
the combinatorial necessary condition formulated in \cite{bm} is sufficient 
for the $\F_1$-structure. 
In \S \ref{sec:graph}, we consider the arrangement defined by a 
graph (\cite[\S 2.3.2]{bm}). We prove that the arrangement is Boolean 
(namely having $\F_1$-structure)  if and only if the first homology group 
of the original graph has a basis consisting of simple cycles which do not 
share edges. 

\section{Characteristic polynomial}
\label{sec:char}

In this section, we recall some basic notions of hyperplane arrangements. 
See \cite{ot} for details. 
Let $\A=\{H_1, \dots, H_n\}$ be an arrangement of affine hyperplanes in 
a vector space $V$ with $\dim V=\ell$ over a field $\K$. The intersection poset is the set 
$L(\A)=\{\cap S\mid S\subset\A \mbox{ with } \cap S\neq\emptyset\}$ of 
nonempty intersections of $\A$. The intersection poset $L(\A)$ is 
partially ordered by reverse inclusion, which has a unique minimal element 
$\widehat{0}=V$. The arrangement $\A$ is said to be \emph{central} if $L(\A)$ has 
a unique maximal element $\cap \A\neq\emptyset$, and is said to be \emph{essential} 
if the maximal elements of $L(\A)$ are $0$-dimensional subspaces. 
An arrangement $\A$ is called a \emph{Boolean arrangement} if $L(\A)$ is 
isomorphic to a Boolean lattice, i.e., the lattice of all subsets of a ground set. 
Let $\A=\{H_1, \dots, H_n\}$ be a Boolean arrangement. 
Then there exists a system of coordinates $(x_1, \dots, x_\ell)$ of $V$ such that 
$H_i$ is equal to the coordinate hyperplane $\{x_i=0\}$ for $1\leq i\leq n$. 
In particular, we have $n\leq \ell$. An arrangement $\A$ is the essential Boolean 
arrangement if and only if $\A$ is essential and $n=\ell$. 

Next, we recall the definition of the characteristic polynomial $\chi(\A, t)$. 
The M\"obius function $\mu:L(\A)\longrightarrow\Z$ is defined by 
\[
\mu(X)=
\begin{cases}
1, & \mbox{ if } X=V, \\
-\sum_{V\leq Y<X}\mu(Y), & \mbox{ if }X>V. 
\end{cases}
\]
Then $\chi(\A, t)=\sum_{X\in L(\A)}\mu(X)t^{\dim X}$. 
One of the most important properties of $\chi(\A, t)$ is the 
following \emph{deletion-restriction formula} (\cite[Cor. 2.57]{ot}). 
Let us fix a hyperplane $H\in\A$. 
Then naturally the deletion $\A':=\A\smallsetminus\{H\}$ and 
restriction $\A'':=H\cap\A'$ are defined. 
Note that $\A''$ is a reduced arrangement in the 
space $H$. Then the following recursive formula holds. 
\[
\chi(\A, t)=\chi(\A', t)-\chi(\A'', t). 
\]
Define the complement of $\A$ by $M(\A)=V\smallsetminus\bigcup_{H\in\A}H$.  
It is easily seen that $M(\A')=M(\A)\sqcup M(\A'')$. Therefore in the 
Grothendieck ring $K_0(\operatorname{\mathcal{V}\mathit{ar}}_{\K})$ of 
varieties over $\K$, we have 
\[
[M(\A)]=\chi(\A, \mathbb{L}), 
\]
where $\mathbb{L}=[\mathbb{A}_{\K}^1]$ is the class of the affine line. 
Suppose $\A$ is defined over $\Z$. Then, for a prime power $q=p^r$ with 
$p\gg 0$, the intersection poset $L(\A)$ is isomorphic to $L(\A\otimes\F_q)$. 
Therefore, $|M(\A\otimes\F_q)|=\chi(\A, q)$. 

\begin{example}
Let $\A$ be the Boolean arrangement defined by $x_1x_2\dots x_n=0$ in 
$\K^\ell$ ($n\leq \ell$). Then $\chi(\A, t)=(t-1)^n t^{\ell-n}$. 
\end{example}

\begin{remark}
Let $\A$ be an arrangement in $\K^\ell$ which is not necessarily 
central. Then the coning $c\A$ (\cite[Definition 1.15]{ot}) is a 
central arrangement in $\K^{\ell+1}$. We have 
$M(c\A)=M(\A)\times\K^\times$. So from now on, we assume that 
all arrangements are central. 
\end{remark}

\section{Arrangements with torified complements}
\label{sec:arr}

A \emph{torification} (\cite{bm, lpl}) of a scheme $X$ over $\Z$ is a morphism of schemes 
$e:T\longrightarrow X$, such that $T=\bigsqcup_j\G_m^{d_j}$ is a disjoint 
union of split tori (where $G_m=\operatorname{Spec}\Z[t, t^{-1}]$), 
the restriction $e|_{\G_m^{d_j}}$ is an isomorphism into 
a locally closed subscheme of $X$, and $e(\K):T(\K)\longrightarrow X(\K)$ 
is bijective for every field $\K$. 

Suppose that 
\begin{itemize}
\item[$(a)$] 
$\A$ is the Boolean arrangement. 
\end{itemize}
Then, there exists a coordinate system such that $\A$ is defined as 
$\{x_1=0\}, \dots, \{x_n=0\}$ in $\K^\ell$ and 
$M(\A)\simeq (\K^\times)^n\times\K^{\ell-n}$. This implies 
(see \cite[\S 1.3.2]{lpl}) 
\begin{itemize}
\item[$(b)$] 
$M(\A)$ has a torification, more precisely, there exists a torified scheme $X$ 
such that $X\otimes\K$ is isomorphic to $M(\A)$ as varieties over $\K$. 
\end{itemize}
Suppose that $M(\A)$ has a torification. Then in the Grothendieck ring, 
$[M(\A)]$ is expressed as a linear combination of $(\mathbb{L}-1)^i$, $i\geq 0$, 
with nonnegative coefficients. Therefore, we have the following. 
\begin{itemize}
\item[$(c)$] 
Suppose $\chi(\A, t)=\sum_i c_i\cdot(t-1)^i$ is the Taylor expansion of 
$\chi(\A, t)$ at $t=1$. Then $c_i\geq 0$ for all $i\geq 0$. 
\end{itemize}
Then, $(a)\Longrightarrow(b)\Longrightarrow(c)$ hold (\cite{bm}). 
Now we discuss other implications. 

\begin{lemma}
\label{lem:coeff}
Let $\A=\{H_1, \dots, H_n\}$ be an arrangement in $V=\K^\ell$. 
If $(c)$ holds, then $n\leq\ell$. 
\end{lemma}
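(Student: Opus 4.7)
The plan is to extract the bound from just the top two coefficients of $\chi(\A,t)$, so almost the entire characteristic polynomial plays no role. First, I would recall the monomial-basis description of the characteristic polynomial: from $\chi(\A,t)=\sum_{X\in L(\A)}\mu(X)t^{\dim X}$, the unique minimum $\widehat{0}=V$ contributes $t^\ell$ (since $\mu(V)=1$), and each hyperplane $H_i$ contributes $\mu(H_i)\,t^{\ell-1}=-t^{\ell-1}$. Therefore
\[
\chi(\A,t)=t^\ell - n\,t^{\ell-1} + (\text{terms of degree}\leq \ell-2).
\]

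Second, I would convert this data to the Taylor basis at $t=1$. Using the binomial identity $t^k=((t-1)+1)^k=\sum_{j=0}^{k}\binom{k}{j}(t-1)^j$, the coefficient $c_j$ of $(t-1)^j$ in $\chi(\A,t)=\sum_{i}c_i(t-1)^i$ is given by $c_j=\sum_{k\geq j}\binom{k}{j}a_k$, where $a_k$ is the coefficient of $t^k$ in $\chi(\A,t)$. Taking $j=\ell-1$, only the two top coefficients $a_\ell=1$ and $a_{\ell-1}=-n$ contribute, and a one-line computation yields
\[
c_{\ell-1}=\binom{\ell}{\ell-1}\cdot 1+\binom{\ell-1}{\ell-1}\cdot(-n)=\ell-n.
\]

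Third, I invoke hypothesis $(c)$: the coefficient $c_{\ell-1}$ must be nonnegative, so $\ell-n\geq 0$, i.e., $n\leq\ell$, as desired.

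I do not foresee a serious obstacle here; the whole argument hinges on noticing that only the two leading coefficients of $\chi(\A,t)$ carry the information needed. The one point that deserves a sentence of justification is the identity $a_{\ell-1}=-n$, which must be valid regardless of whether $\A$ is essential; however, this follows directly from the Möbius-sum formula above and the fact that each $H_i$ is a distinct codimension-one element of $L(\A)$, so no genuine difficulty arises.
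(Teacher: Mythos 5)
Your proof is correct and follows essentially the same route as the paper: both extract the top two coefficients $t^\ell - n\,t^{\ell-1}$ of $\chi(\A,t)$ from the M\"obius-sum formula, rewrite in powers of $(t-1)$ to find $c_{\ell-1}=\ell-n$, and conclude from $(c)$ that $n\leq\ell$. Your explicit binomial computation of $c_{\ell-1}$ just makes the paper's one-line change of basis more detailed.
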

\begin{proof}
Recall that the $\chi(\A, t)$ is a polynomial of  the form 
$t^\ell-n t^{\ell-1}+(\mbox{terms of $\deg\leq \ell-2$})$. 
It is equal to $(t-1)^\ell+(\ell-n)(t-1)^{\ell-1}+(\mbox{terms of $\deg\leq \ell-2$})$. 
Hence $(c)$ implies $\ell-n\geq 0$. 
\end{proof}

\begin{theorem}
\label{thm:main1}
Let $\A=\{H_1, \dots, H_n\}$ be an arrangement in $\K^\ell$. Then, 
\begin{itemize}
\item[(1)] $(a)$ and $(b)$ are equivalent. 
\item[(2)] Suppose $\A$ is essential. Then $(a)$, $(b)$ and $(c)$ are equivalent. 
\end{itemize}
\end{theorem}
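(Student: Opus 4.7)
The plan is to handle (2) first and then deploy a geometric argument for the harder direction of (1). For (2), only $(c)\Rightarrow(a)$ needs proof, since $(a)\Rightarrow(b)\Rightarrow(c)$ is known. Lemma \ref{lem:coeff} gives $n\leq\ell$, while essentiality of the (central) arrangement $\A$ forces the linear forms $\alpha_1,\ldots,\alpha_n$ defining the $H_i$ to span the dual $V^\ast$, so $n\geq\ell$. Hence $n=\ell$ and the $\alpha_i$ form a basis of $V^\ast$; taking them as coordinates exhibits $\A$ as the essential Boolean arrangement.

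For (1), the task reduces to $(b)\Rightarrow(a)$. The plan is to exploit the unique top-dimensional torus in a torification of $M(\A)$. Writing $M(\A)=\bigsqcup_j T_j$ with $T_j\cong\G_m^{d_j}$, irreducibility of $M(\A)$ together with $\dim M(\A)=\ell$ isolates a unique $T_0$ with $d_0=\ell$, which is necessarily an open dense subscheme of $M(\A)$, hence of $\K^\ell$. Since $T_0\cong\G_m^\ell$ is affine, its complement in $\K^\ell$ is pure of codimension one, hence writable as $V(f)$ with $f=f_1\cdots f_s$ squarefree; comparing the unit groups $\mathcal{O}(T_0)^\times\cong\K^\times\times\Z^s$ and $\mathcal{O}(\G_m^\ell)^\times\cong\K^\times\times\Z^\ell$ forces $s=\ell$.

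Because $T_0\subset\K^\ell\setminus\bigcup_i H_i$, each $H_i$ is contained in $\bigcup_j V(f_j)$, and irreducibility of $H_i$ forces $H_i=V(f_{\sigma(i)})$ for some injection $\sigma:\{1,\ldots,n\}\hookrightarrow\{1,\ldots,\ell\}$. In particular $n\leq\ell$ and each $f_{\sigma(i)}$ is an irreducible linear form, homogeneous by centrality. To conclude $\A$ is Boolean it suffices to show these $n$ linear forms are linearly independent, since they would then extend to a basis of $V^\ast$ in which the $H_i$ become coordinate hyperplanes. For this, consider the polynomial map $\psi=(f_1,\ldots,f_\ell):\K^\ell\to\K^\ell$; its restriction to $T_0$ is the isomorphism onto $\G_m^\ell$, so $\psi$ is dominant. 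A linear relation $\sum_i a_i f_{\sigma(i)}=0$ would force $\psi(\K^\ell)$ into the proper closed subvariety $\{\sum_i a_i y_{\sigma(i)}=0\}\subset\K^\ell$, a contradiction.

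The main obstacle is the structural step extracting ``$\K^\ell\setminus T_0$ is a union of exactly $\ell$ irreducible hypersurfaces'' from ``$T_0$ is an open affine subscheme of $\K^\ell$ isomorphic to $\G_m^\ell$.'' This combines the pure-codimension-one complement property for affine opens in $\K^\ell$ with the unit-group count. Once that input is secured, matching arrangement hyperplanes with components of $V(f)$ and ruling out linear dependencies via dominance are both immediate.
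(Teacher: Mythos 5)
Your proof of (2) coincides with the paper's: only $(c)\Rightarrow(a)$ needs an argument, Lemma \ref{lem:coeff} gives $n\leq\ell$, essentiality of the central arrangement gives $n\geq\ell$, and $n=\ell$ with spanning forms yields the Boolean arrangement. For $(b)\Rightarrow(a)$ your route is organized genuinely differently, though it ultimately rests on the same decisive fact as the paper's: the unit group of $\K[t_1^{\pm1},\dots,t_\ell^{\pm1}]$ is $\K^\times\times\Z^\ell$, i.e., units are monomials up to scalar. The paper argues by direct contradiction: if $\A$ is not Boolean then, after a coordinate change, $\{x_1=0\},\dots,\{x_r=0\},\{x_1+\cdots+x_r=0\}$ all lie in $\A$, and the dense $\ell$-torus of the torification yields an embedding of $\K$-algebras $\K[x_1^{\pm1},\dots,x_r^{\pm1},x_{r+1},\dots,x_\ell,(x_1+\cdots+x_r)^{-1}]\subset\K[t_1^{\pm1},\dots,t_\ell^{\pm1}]$, which is impossible since $x_1,\dots,x_r,x_1+\cdots+x_r$ are $\K$-linearly dependent units, while units of the Laurent ring are monomials and hence linearly independent over $\K$. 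Your version instead extracts structural information: purity in codimension one of the complement of the affine open $T_0\subset\K^\ell$, the count $s=\ell$ of irreducible components via the unit-group rank, identification of each $H_i$ with a component $V(f_{\sigma(i)})$, and a dominance argument to exclude linear relations. This buys a sharper picture (the complement of the top cell has exactly $\ell$ components, of which exactly $n$ are the hyperplanes of $\A$) at the cost of the purity input for affine opens in a normal variety; the paper's contradiction is shorter and needs none of that.

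One step of yours needs repair as stated: ``its restriction to $T_0$ is the isomorphism onto $\G_m^\ell$'' is not justified, since the isomorphism $T_0\cong\G_m^\ell$ furnished by the torification need not be the map $(f_1,\dots,f_\ell)$. Your own unit-group computation closes the gap: the classes of $f_1,\dots,f_\ell$ form a $\Z$-basis of $\mathcal{O}(T_0)^\times/\K^\times$ (units of the localization $\K[x_1,\dots,x_\ell][1/f]$ of a UFD are $c\,f_1^{a_1}\cdots f_\ell^{a_\ell}$), so under any $\K$-algebra isomorphism $\mathcal{O}(T_0)\cong\K[t_1^{\pm1},\dots,t_\ell^{\pm1}]$ each $f_j$ maps to $c_j t^{b_j}$ with $(b_j)$ a $\Z$-basis of $\Z^\ell$; hence $(f_1,\dots,f_\ell)$ differs from the given isomorphism by a monomial automorphism of $\G_m^\ell$, is itself an isomorphism on $T_0$, and in particular $\psi$ is dominant. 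Alternatively you can skip $\psi$ entirely: a relation $\sum_i a_i f_{\sigma(i)}=0$ is a nontrivial $\K$-linear relation among units of $\mathcal{O}(T_0)\cong\K[t_1^{\pm1},\dots,t_\ell^{\pm1}]$ representing distinct classes in the unit lattice, contradicting the linear independence of monomials --- which is exactly the paper's punchline. With this touch-up your argument is complete and correct.
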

\begin{proof}
We first consider (2). 
If $\A$ is essential, then by definition, $n\geq \ell$. Suppose $(c)$ holds. 
Then Lemma \ref{lem:coeff} tells that $n\leq \ell$, hence $n=\ell$, which 
implies $(a)$. 

Next we consider (1). We need to prove $(b)\Longrightarrow (a)$. 
Let us assume that $M(\A)$ has a torification. Then there exists a 
dominant morphism $(\K^\times)^\ell\hookrightarrow M(\A)$. 
Suppose that $\A$ is not Boolean. 
Then there exist dependent hyperplanes. Namely, after a change of 
coordinates, we may assume $\{x_1=0\}, \dots, \{x_r=0\}, 
\{x_1+x_2+\dots+x_r=0\}$ (with $2\leq r\leq n-1$) are in $\A$. 
Then we have $(\K^\times)^\ell\hookrightarrow M(\A)\subset 
\{x_1x_2\dots x_r(x_1+\dots+x_r)\neq 0\}\subset\K^\ell$. 
Taking the ring of functions, we have an embedding of $\K$-algebras. 
\[
\K\left[x_1^{\pm 1}, \dots, x_r^{\pm 1}, x_{r+1}, \dots, x_\ell, \frac{1}{x_1+\dots+x_r}
\right]
\subset
\K[t_1^{\pm 1}, \dots, t_\ell^{\pm 1}]. 
\]
However, this is impossible. Because in the right hand side, the set of 
invertible elements is equal to the set of monomials, which are linearly 
independent over $\K$. However, in the left hand side, 
$x_1, \dots, x_r, x_1+\dots+x_r$ are dependent invertible elements. 
\end{proof}

The following example shows that if $\A$ is not essential, 
$(c)$ is not sufficient for $(a)$ or $(b)$. 
\begin{example}
Consider the arrangement $\A$ defined by $x_1x_2x_3(x_1+x_2+x_3)=0$ in $\K^4$. 
Then, $\chi(\A, t)=t^4-4t^3+6t^2-3t=(t-1)^4+(t-1)$. Hence the coefficients 
of $(t-1)^i$ are nonnegative, however, $\A$ is not Boolean.  
\end{example}

\section{The arrangement in the cycle space of a graph}
\label{sec:graph}

Let $\Gamma=(V, E)$ be a finite graph. Let $H_1(\Gamma)=H_1(\Gamma, \K)$ 
be the cycle space over $\K$. 

Since $\Gamma$ is a $1$-dimensional CW-complex, 
any $1$-cochain is automatically a cocycle. 
Therefore, any edge $e\in E$ (equipped with an orientation) determines an element 
of $H^1(\Gamma)=\Hom(H_1(\Gamma), \K)$. We denote the element by 
$\eta_e:H_1(\Gamma)\longrightarrow\K$. 

If $\eta_e\neq 0$, then $H_e:=\Ker\eta_e$ defines a hyperplane in 
$H_1(\Gamma)\simeq\K^{b_1(\Gamma)}$. 
As in \cite{bm}, define $\A_\Gamma$ as 
$\A_\Gamma:=\{H_e\mid e\in E, \eta_e\neq 0\}$. 
We note that $e, e'\in E$ with $e\neq e'$ may define the same hyperplane. 
We just forget the multiplicities and consider the reduced arrangement. 

\begin{lemma}
\label{lem:essential}
The arrangement $\A_\Gamma$ is essential. 
In particular, $|\A_\Gamma|\geq b_1(\Gamma)$. 
\end{lemma}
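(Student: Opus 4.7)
The plan is to unpack what $\eta_e$ measures on a cycle and then observe that the only cycle annihilated by every nontrivial $\eta_e$ is the zero cycle.

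First I would fix the viewpoint that $H_1(\Gamma,\K)$ sits inside the space of $1$-chains $C_1(\Gamma,\K)=\bigoplus_{e\in E}\K\cdot e$, and that the cochain $\eta_e$ is nothing but the coefficient-extraction functional: for a $1$-chain $c=\sum_{f\in E}a_f\cdot f$, we have $\eta_e(c)=a_e$. Next I would identify exactly which $\eta_e$ become zero after restriction to $H_1(\Gamma)$: the restriction $\eta_e|_{H_1(\Gamma)}$ vanishes iff no cycle uses the edge $e$, which is the definition of $e$ being a bridge. So the arrangement $\A_\Gamma$ consists of the hyperplanes $H_e=\Ker\eta_e$ as $e$ ranges over the non-bridge edges.

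Now suppose $c\in\bigcap_{e\text{ non-bridge}}H_e\subset H_1(\Gamma)$. Writing $c=\sum_f a_f\cdot f$, we get $a_e=\eta_e(c)=0$ for every non-bridge $e$. On the other hand, $c$ is a cycle, and any cycle must have coefficient $0$ on every bridge (removing a bridge would disconnect the two endpoints, so a nonzero coefficient on it could not participate in a closed cycle at the chain level). Therefore every coefficient $a_f$ vanishes, giving $c=0$. This shows $\bigcap_{H\in\A_\Gamma}H=\{0\}$, i.e., $\A_\Gamma$ is essential.

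For the cardinality statement, I would simply invoke the standard dimension-count: in $V\cong\K^{b_1(\Gamma)}$, any intersection of $n$ hyperplanes has dimension at least $\ell-n$, so having a $0$-dimensional intersection forces $n\geq\ell=b_1(\Gamma)$, proving $|\A_\Gamma|\geq b_1(\Gamma)$. There is no real obstacle here; the only subtlety worth stating carefully is the bridge observation, since it is exactly what makes the "non-bridge" restriction harmless when certifying essentiality.
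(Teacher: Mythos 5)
Your proof is correct, and it takes a slightly more hands-on route than the paper's. The paper's argument is a two-line duality statement: since $\Gamma$ is a $1$-dimensional CW-complex, every $1$-cochain is a cocycle, so $\{\eta_e\}_{e\in E}$ spans $H^1(\Gamma)=\Hom(H_1(\Gamma),\K)$, and a spanning set of functionals has trivial common kernel, i.e.\ $\bigcap_{e\in E}\Ker\eta_e=\{0\}$. You instead work at the chain level: you identify $\eta_e$ as coefficient extraction on $H_1(\Gamma)\subset C_1(\Gamma,\K)$, characterize the edges with $\eta_e|_{H_1}=0$ as exactly the bridges, and then kill an arbitrary element of the common intersection coefficient by coefficient, using the observation that a cycle must have coefficient $0$ on every bridge. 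The two arguments establish the same underlying fact (the nonzero $\eta_e$ span the dual space), but yours makes explicit a point the paper's proof quietly elides: the paper writes $\bigcap_{e\in E}H_e=\{0\}$, silently including the edges with $\eta_e=0$, whose "kernel" is all of $H_1(\Gamma)$ and which are discarded in the definition of $\A_\Gamma$; your bridge analysis is precisely the verification that discarding them is harmless. Your justification of the bridge claim ("a nonzero coefficient could not participate in a closed cycle") is stated informally, but it is the standard cut argument --- summing the vertex coefficients of $\partial c$ over one side of the cut determined by the bridge yields $\pm a_e=0$ --- so there is no gap, only a line you could add for completeness. Your dimension count for $|\A_\Gamma|\geq b_1(\Gamma)$ matches what the paper leaves implicit.
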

\begin{proof}
Since $\{e\in E\}$ spans the space of $1$-cochains, $\{\eta_e\}$ generates 
the space of $1$-cocycles, hence $H^1(\Gamma)$. We have $\cap_{e\in E}H_e=\{0\}$. 
\end{proof}

Let $\scF$ be a spanning forest of $\Gamma$. 
Let $e\in E\smallsetminus\scF$. Then the vertices of $e$ are connected by 
the unique minimal path in $\scF$. By adding $e$ to this path, we have a 
simple cycle $C_e\subset E$. 
By putting suitable orientations, the set of such cycles 
$\B(\scF):=\{C_e\subset E\mid e\in E\smallsetminus \scF\}$ 
forms a basis of $H_1(\Gamma)$. 
Such a basis is called a fundamental basis \cite{sys}. 

\begin{definition}
Let $\B\subset H_1(\Gamma)$ be a basis consisting of simple cycles (that is, 
each cycle passes an edge at most once). 
Then $\B$ is said to be separated if any two cycles $C, C'\in\B$ ($C\neq C'$) 
do not share edges. 
\end{definition}

\begin{theorem}
\label{thm:main2}
Let $\Gamma=(V, E)$ be a graph. Then the following are equivalent. 
\begin{itemize}
\item[$(i)$] $M(\A_\Gamma)$ has a torification. 
\item[$(ii)$] $\A_\Gamma$ is Boolean. 
\item[$(iii)$] $|\A_\Gamma|=b_1(\Gamma)$. 
\item[$(iv)$] 
For any spanning forest $\scF\subset\Gamma$, the fundamental basis 
$\B(\scF)$ is separated. 
\item[$(v)$] 
There exists a spanning forest $\scF\subset\Gamma$ such that the fundamental basis 
$\B(\scF)$ is separated. 
\item[$(vi)$] 
There exists a separated basis $\B$. 
\end{itemize}
\end{theorem}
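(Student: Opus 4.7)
The proof proceeds by establishing the cycle $(i)\Leftrightarrow(ii)\Leftrightarrow(iii)\Rightarrow(iv)\Rightarrow(v)\Rightarrow(vi)\Rightarrow(iii)$. The first equivalence is immediate from Theorem~\ref{thm:main1}(1). For $(ii)\Leftrightarrow(iii)$, combine Lemma~\ref{lem:essential} (which yields essentiality and $|\A_\Gamma|\geq b_1(\Gamma)$) with the fact recorded in \S\ref{sec:char} that an essential arrangement is Boolean if and only if $n=\ell$. The implications $(iv)\Rightarrow(v)\Rightarrow(vi)$ are trivial since spanning forests always exist.

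For $(vi)\Rightarrow(iii)$, let $\B=\{C_1,\dots,C_k\}$ be a separated basis (so $k=b_1(\Gamma)$) and let $(x_1,\dots,x_k)$ be the dual coordinates on $H_1(\Gamma)$. A bridge edge $e$ lies in no cycle, so $\eta_e=0$ and contributes nothing to $\A_\Gamma$. For a non-bridge edge $e$, the functional $\eta_e$ is nonzero, hence $e$ must lie in some $C_i$; by separatedness this $i$ is unique, forcing $\eta_e=\pm x_i$ and $H_e=\{x_i=0\}$. Distinct cycles in $\B$ produce distinct coordinate hyperplanes, so $|\A_\Gamma|=k=b_1(\Gamma)$.

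The central step is $(iii)\Rightarrow(iv)$. Fix any spanning forest $\scF$ and work with the fundamental basis $\B(\scF)=\{C_e:e\in E\smallsetminus\scF\}$ together with its dual basis $\{C_e^*\}$. Because each fundamental cycle contains a unique chord, for a chord edge $e$ one has $\eta_e=\pm C_e^*$, so the $b_1(\Gamma)$ chord hyperplanes are pairwise distinct and, by $(iii)$, exhaust $\A_\Gamma$. For a tree edge $f$, however, $\eta_f=\sum_{e:\,f\in C_e}\pm C_e^*$. Suppose for contradiction that some non-bridge tree edge $f$ lies in two different fundamental cycles $C_e,C_{e'}$; then $\eta_f$ is supported on at least two distinct dual basis vectors, so $H_f$ coincides with no chord hyperplane $H_{e''}$, giving $|\A_\Gamma|>b_1(\Gamma)$ and contradicting $(iii)$. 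Hence every tree edge lies in at most one fundamental cycle, and since distinct $C_e,C_{e'}$ already share no chord edges, $\B(\scF)$ is separated. The main obstacle lies precisely in this step: one has to translate the combinatorial notion of ``shared edges between fundamental cycles'' into the sharp linear-algebraic statement about the support of $\eta_f$ in the fundamental dual basis, after which the counting is immediate.
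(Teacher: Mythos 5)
Your proof is correct, and it differs from the paper's in a small but genuine structural way. The two substantive arguments you give are essentially the paper's own, slightly rearranged and in one place more detailed: your $(vi)\Rightarrow(iii)$ (each non-bridge edge lies in a unique cycle of a separated basis, so $\eta_e=\pm C_i^*$) is the paper's $(vi)\Rightarrow(ii)$, and your $(iii)\Rightarrow(iv)$ is the paper's $(ii)\Rightarrow(iv)$ — except that where the paper merely asserts that $\{H_{e_0},H_{e_1},\dots,H_{e_b}\}$ are $b+1$ distinct hyperplanes, you justify it properly: $\eta_{e}=\pm C_e^*$ for each chord $e$, while $\eta_{e_0}$ has support of size at least two in the fundamental dual basis, hence $H_{e_0}$ is proportional to no chord hyperplane. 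The real difference is the implication graph. The paper proves $(vi)\Rightarrow(iv)$ directly by a standalone combinatorial argument: since the cycles of a separated basis $\B$ are edge-disjoint, any spanning forest must omit exactly one edge from each $C_i$ and nothing else, and uniqueness of tree paths then forces $\B(\scF)=\B$ for \emph{every} spanning forest $\scF$. You bypass this entirely by closing the single cycle $(iii)\Rightarrow(iv)\Rightarrow(v)\Rightarrow(vi)\Rightarrow(iii)$, so that $(vi)\Rightarrow(iv)$ falls out for free. Your route is more economical and keeps the whole proof inside the linear algebra of $\{\eta_e\}$ in the fundamental dual basis; the paper's route costs one extra argument but yields the nice combinatorial fact that a separated basis is the fundamental basis of every spanning forest, which is invisible in your version. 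Both are complete; no gaps.
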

\begin{proof}
$(i)\Longleftrightarrow (ii)\Longleftrightarrow (iii)$ is obtained from 
Theorem \ref{thm:main1} and Lemma \ref{lem:essential}. 
Also $(iv)\Longrightarrow (v)\Longrightarrow (vi)$ is obvious. 
Now we assume $(vi)$. 
Let $\B=\{C_1, \dots, C_b\}$ be a basis as in $(vi)$. Then every 
spanning forest is obtained from $\Gamma$ by removing an edge of $C_i$ for each $i$. 
Thus $\B$ is the fundamental basis for every spanning forest $\scF$. 
This proves $(vi)\Longrightarrow (iv)$. 

Now we prove $(vi)\Longrightarrow (ii)$. Let $\scF$ be a basis as in $(vi)$. 
Then every edge $e\in  E$ is either contained in the unique cycle $C\in\B$ or 
not contained in $\bigcup_{C\in\B}C$. In the latter case, $\eta_e=0$, hence we 
do not need to consider. Hence $\{\eta_e\}\subset H^1(\Gamma)$ 
forms a dual basis of $\B$. Thus $\A_\Gamma$ is Boolean. 

Finally, we prove $(ii)\Longrightarrow (iv)$. Suppose $(ii)$ and there exists 
a spanning forest $\scF$ such that $\B(\scF)$ is not separated. There exists 
an edges $e_0\in E$ such that $\{C\in\B(\scF)\mid C\ni e_0\}$ contains 
more than one cycles. 
Let $E\smallsetminus \scF=\{e_1, \dots, e_b\}$. Then 
$\{H_{e_0}, H_{e_1}, \dots, H_{e_b}\}\subset\A_\Gamma$ forms $b+1$ distinct hyperplanes. 
This contradicts the fact that $\A_\Gamma$ is Boolean. This completes 
the proof. 
\end{proof}

\begin{remark}
The graph satisfying the conditions in Theorem \ref{thm:main2} is 
obtained by finitely many repetitions of gluing a graph with $b_1\leq 1$ at a vertex. 
\end{remark}

\begin{remark}
\label{rem:revision}
There is another way to associate a hyperplane arrangement $\A(\Gamma)$ 
to a graph $\Gamma$, which is called the \emph{graphic arrangement} 
(see \cite[\S 2.4]{ot} for details). The two arrangements $\A_\Gamma$ and 
$\A(\Gamma)$ are dual to each other in the sense of matroids (see \cite{ox}). 
In view of this interpretation, Theorem \ref{thm:main2} characterizes those 
graphs $\Gamma$ for which the simplification of the dual of the circuit matroid 
of $\Gamma$ is the free matroid. 
\end{remark}

\medskip

\noindent
{\bf Acknowledgements.} 
Masahiko Yoshinaga 
was partially supported by JSPS KAKENHI 
Grant Numbers JP19K21826, JP18H01115. 
The authors thank the referee for pointing out Remark \ref{rem:revision}.

\end{document}